%
%
%
%
\documentclass{amsart}

\usepackage{amsfonts,amssymb,amsmath}
\usepackage{graphicx}
\usepackage{psfrag}
\usepackage{color,xcolor}
\usepackage{verbatim}
\usepackage{epstopdf}

\newtheorem{thm}{Theorem}[section]
\newtheorem{lem}[thm]{Lemma}

\newtheorem{cor}[thm]{Corollary}

\theoremstyle{definition}

\theoremstyle{remark}

\newtheorem{exam}{Example}[section]

\numberwithin{equation}{section}




\newcommand{\Z}{{\mathbb Z}}




\def\N{\mathcal{N}}

\def\R{\mathbb{R}}

\def\vep{\varepsilon}

\def\g{\mathcal{G}}

\def\bfc{\mathbf{c}}

\def\wdt{\widetilde}

\newcommand{\dist}{{\rm dist}}

\newcommand{\Var}{{\rm Var}}

\begin{document}

\title[Assouad dimension of the graph for Takagi function]{Assouad dimension of the graph for Takagi function}

\author{Lai Jiang}
\address{School of Fundamental Physics and Mathematical Sciences, Hangzhou Institute for Advanced Study, University of Chinese Academy of Sciences, Hangzhou 310024, China}
\email{jianglai@ucas.ac.cn}

\subjclass[2010]{Primary 28A80; Secondary 41A30.}
\date{}

\keywords{Takagi function, van der Waerden function, Assouad dimension}

\begin{abstract}
For any integer $b\geq2$ and real series $\{c_n\}$ such that $\sum_{n=0}^\infty|c_n|<\infty$, the generalized Takagi function $f_{{\mathbf c},b}(x)$ is defined by
$$
	f_{{\mathbf c},b}(x):=\sum_{n=0}^\infty  c_n\phi(b^n x), \quad x\in [0,1],
$$
where $\phi(x)=\dist(x,\mathbb{Z})$ is the distance from $x$ to the nearest integer. 
The collection of functions with the form are called the Takagi class.
In this paper, we show that in the case that $\varlimsup_{n \to \infty} b^n |c_n|<\infty$, the Assouad dimension of the graph ${\mathcal G} f_{{\mathbf c},b}=\{(x,f_{{\mathbf c},b}(x)):x\in[0,1]\}$ for the generalized Takagi function $f_{{\mathbf c},b}(x)$ is equal to one, that is,
$$
\dim_A {\mathcal G} f_{{\mathbf c},b}=1.
$$
In particular, for each $0<a<1$ and integer $b \geq 2$, we define Takagi function $T_{a,b}$ as followed,
$$
	T_{a,b}(x):=\sum_{n=0}^\infty a^n \phi(b^n x),  \quad x\in [0,1].
$$
Then
$
	\dim_A {\mathcal G} T_{a,b}=1
$ 
if and only if $0<a \leq 1/b$.

\end{abstract}

\maketitle

\section{Introduction}
Takagi function, which is a nowhere differentiable function like Weierstrass function, has been studied extensively after being introduced by Takagi \cite{T1903}. In this paper, we focus on the Assouad dimension of the graph for Takagi function, and our main result gives the precise Assouad dimension.

\subsection{Takagi function}
It was a very well-known classical question whether continuous functions must be differentiable. Weierstrass \cite{W1872} constructed a famous nowhere differentiable function to give a negative answer for this question. Later, Takagi \cite{T1903} introduced another nowhere differentiable function defined by
$$T(x):=\sum_{n=0}^{\infty} \frac{\phi(2^n x)}{2^n}, \quad x \in [0,1],$$
where $\phi(x)=\dist(x,\mathbb{Z})$ is the distance from $x$ to the nearest integer.  Takagi \cite{T1903} proved its nowhere differentiability and Billingsley \cite{B82} gave a simplified proof later. 

The classical Takagi function $T(x)$ has attracted widespread attention. Hata and Yamaguti \cite{HY84} regarded the Takagi function as a solution of the discrete boundary value problem. Buczolich \cite{B08} found that the level set of the Takagi function is a finite set. Allaart and Kawamura \cite{AK11} studied further properties of these level sets.

There is a further generalization of the classical Takagi function, which expands its properties and applications. More precisely, for each integer $b\geq2$, the generalized Takagi function is defined by
$$
T_b(x):=\sum_{n=0}^\infty\frac{\phi(b^n x)}{b^n}, \quad x\in [0,1].
$$
When $b=2$, the function $T_2$ is the classical Takagi function. When $b=10$, the function $T_{10}$ is the van der Waerden function \cite{V30}. Baba \cite{B84} studied the maximum value of $T_b$. 
Shidfar and Sabetfakhri \cite{SS86} showed that $T_b$ is H$\ddot{\mbox{o}}$lder continuous with any order $\alpha<1$.
Allaart \cite{A14} studied the level sets of $T_b$.

{
Furthermore, let $a,b$ are real parameters such that $a<1$, $b>1$, $ab \geq 1$.
We can defined 
\begin{equation}
	T_{a,b}:=\sum_{n=0}^\infty a^n \phi(b^n x),  \quad x\in [0,1].
\end{equation}
}
Another direct generalization of the Takagi function is obtained by replacing
the factor $a^n$ with a sequence real constant $\{c_n\}_{n=0}^\infty$ such that $\sum_{n=0}^\infty |c_n|<\infty$. 
This gives functions of the form
\begin{equation}\label{eq:takagi-class}
	f_{\bfc,b}(x) := \sum_{n=0}^\infty c_n \phi(b^nx),\quad x\in [0,1] .	
\end{equation}
The collection of functions with the form in Eq.\eqref{eq:takagi-class} is called the \emph{Takagi class}.

Kôno \cite{K87} studied the continuity of $f_{\bfc,2}$.
If $\{2^n c_n\} \in \ell^2$, then $f_{\bfc,2}$ is absolutely continuous and hence differentiable almost everywhere.
If $\{2^n c_n\} \notin \ell^2$ and $\lim_{n \to \infty} 2^n c_n=0$, 
$f_{\bfc,2}$ is differentiable on an uncountably large set, while
$f_{\bfc,2}$ is not differentiable at almost
every point of $[0, 1]$.
If $\varlimsup_{n \to \infty }2^n|c_n| > 0$, then $f_{\bfc,2}$ is nowhere differentiable.

The signal Takagi function \cite{A13} is an important application of Takagi function, we give a example in end of this paper.


For each function $f$ defined on $D$, denote the graph of the function $f(x)$ by
$$
\g f:=\{(x,f(x)): x \in D\}.
$$
Note that for any integer $b \geq2$, the closed set $\g T_b\subset\mathbb{R}^2$ is a fractal set and both the Hausdorff dimension and box dimension of $\g T_b$ are equal to one, see, e.g., \cite{B15,KMY84}. However, the Assouad dimension of $\g f_{\bfc,b}$ and $\g T_{a,b}$ is still unknown and is computed for the first time in this paper.

\subsection{Assouad dimension}
We now recall the definition of the Assouad dimension. In our context, by writing $U(p,q,t)\lesssim V(p,q,t)$, we mean that there exists a constant $C>0$ which is independent on $p,q,t$ such that $U(p,q,t)\leq CV(p,q,t)$ for all $p,q,t$.

Let $d\geq1$ be a fixed integer used to represent dimensionality.
For any bounded set $E\subset\mathbb{R}^d$ and any $\delta>0$, a finite or countable collection of open sets $\{U_i\}_i$ is called a $\delta$-cover of $E$ if $E\subset\bigcup_{i} U_i$ and the diameter of each $U_i$ is not more than $\delta$: 
$$
	\mathrm{diam}(U_i)\leq\delta.
$$
Let $N_\delta(E)$ be the least number of the open sets in all possible $\delta$-covers of $E$. We denote the closed ball with center $x \in \R^d$ and radius $\rho>0$ by
$$B(x,\rho)=\{y \in \R^d : |y- x| \leq \rho\}.$$
Then for any bounded set $F\subset\mathbb{R}^d$, its Assouad dimension is defined by
$$
\dim_A F:=\inf\Big\{\alpha>0:\mbox{ for all }0<r<R\mbox{ and }x\in F,\,N_r\big(B(x,R)\cap F\big)\lesssim\Big(\frac{R}{r}\Big)^\alpha\Big\}.
$$
We refer the reader to \cite{F21} for more details of the Assouad dimension.

There is another equivalent definition of Assouad dimension by \cite{F90,F21}. For any $\delta>0$, a $\delta$-mesh or $\delta$-grid in $\R^d$ is the family of cubes of the form
$$
[m_1 \delta, (m_1+1) \delta ] \times [m_2 \delta,(m_2+1) \delta ]  \times \cdots \times [m_d \delta,(m_d+1) \delta ]	
$$
with integers $m_1,m_2, \ldots ,m_d\in\mathbb{Z}$. For any bounded set $E\subset\mathbb{R}^d$, let $\N_\delta(E)$ be the least number of the cubes in all possible $\delta$-meshs that cover $E$. We denote the closed cube with center $x=(x_1\ldots,x_d)\in\mathbb{R}^d$ and side length $2\rho$ by
$$Q(x,\rho)=[x_1-\rho,x_1+\rho]\times\cdots\times[x_d-\rho,x_d+\rho].$$
Then for any fixed positive integer $b\geq2$, we have
$$
\dim_A F=\inf\big\{\alpha>0:\mbox{ for all }n,m\in\Z^+\mbox{ and }x\in F,\,\N_{b^{-n-m}}\big(Q(x,b^{-n})\cap F\big)\lesssim b^{\alpha m}\big\}. 
$$
Note that here the value of Assouad dimension is independent of the choice of $b$.

For any bounded set $F\subset\mathbb{R}^d$, denote $\dim_H F$, $\dim_B F$, $\underline\dim_B F$ and $\overline\dim_B F$ the  Hausdorff dimension, box dimension, lower box dimension and upper box dimension of $F$ respectively. Note that 
\begin{align}\label{eq:dim_H<B<F}
	\dim_H F\leq\underline\dim_B F\leq\overline\dim_B F\leq\dim_A F.
\end{align}
See \cite{F90,F21} for this inequality as well as the definitions of Hausdorff dimension and box dimension. 
{The inequality in Eq.~\eqref{eq:dim_H<B<F} can be strict.}
Mitchell and Olsen \cite{MO18} constructed a fractal set $X$ by using iteration such that
$$
	\dim_H X <\underline\dim_B X<\overline\dim_B X<\dim_A X.
$$
  Yu \cite{Y20} proved that there exists Takagi function $T_{a,b}$ such that the box dimension is strictly smaller than the Assouad dimension for certain $a,b$. We refer the reader to \cite{ABK24, BC23, CFY22, R23, S18} for more details of the fractal dimensions.

\subsection{Main result}
We now turn to the graph $\g f_{\bfc,b}$ of the generalized Takagi function $f_{\bfc,b}$ with any integer $b \geq2$. Since for each integer $b\geq2$, the Hausdorff dimension and box dimension of the graph $\g T_b $ are equal to one \cite{B15,KMY84}, by Eq.~\eqref{eq:dim_H<B<F},
$$
\dim_A \g T_b \geq\dim_B\g T_b=\dim_H\g T_b=1.
$$
Our main result is the following.

\begin{thm}\label{thm:assouad}
For any integer $b\geq2$ and $\bfc=\{c_k\} $ such that 
$\varlimsup_{k \to \infty}  b^k|c_k |  <\infty$, we have
$$\dim_A\g T_{\bfc,b}=1.$$
\end{thm}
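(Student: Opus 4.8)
The plan is to prove the two inequalities $\dim_A\g T_{\bfc,b}\ge 1$ and $\dim_A\g T_{\bfc,b}\le 1$ separately, with essentially all the work going into the upper bound. For the lower bound I would use that the coordinate projection $\pi(x,y)=x$ is $1$-Lipschitz and maps $\g T_{\bfc,b}$ onto $[0,1]$; since the Assouad dimension does not increase under Lipschitz maps (see \cite{F21}), this gives $\dim_A\g T_{\bfc,b}\ge \dim_A[0,1]=1$.

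For the upper bound I would work with the mesh characterisation recalled above and show that, for every $n,m\in\Z^+$ and every $p=(x_0,f(x_0))\in\g f$ (writing $f=T_{\bfc,b}$, $R=b^{-n}$, $r=b^{-n-m}$),
$$
\N_{r}\big(Q(p,R)\cap \g f\big)\lesssim (m+2)\,b^{m}.
$$
Since $m=\log_b(R/r)$, the factor $m+2$ is subpolynomial in $R/r$ and is therefore absorbed into $(R/r)^{\alpha}$ for every $\alpha>1$; letting $\alpha\downarrow1$ then yields $\dim_A\g f\le 1$. The hypothesis $\varlimsup_{k}b^{k}|c_k|<\infty$ enters only to guarantee $|c_k|\lesssim b^{-k}$ for all $k$, which I would assume henceforth after absorbing constants.

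The heart of the argument is a frequency splitting $f=g+h$ at scale $b^{-n}$, where $g(x)=\sum_{k<n}c_k\phi(b^kx)$ is the low-frequency trend and $h(x)=\sum_{k\ge n}c_k\phi(b^kx)$ is the high-frequency fluctuation. Two facts drive everything. First, $h$ is uniformly small: $\|h\|_\infty\le\tfrac12\sum_{k\ge n}|c_k|\lesssim b^{-n}=R$, and on any mesh interval $I$ of length $r$ one has $\mathrm{osc}_I h\lesssim (m+1)r$ — split the sum at $k=n+m$, noting that each of the $m$ middle terms contributes at most $|c_k|b^kr\lesssim r$ (Lipschitz bound, together with the fact that $I$ meets at most one kink of $\phi(b^k\cdot)$), while the tail obeys $\sum_{k\ge n+m}|c_k|\lesssim r$. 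Second, $g$ is piecewise linear with only $O(1)$ breakpoints on $[x_0-R,x_0+R]$, because the kinks of $\phi(b^k\cdot)$ for $k<n$ are spaced at least $\tfrac b2 b^{-n}\ge R$ apart; after cutting $[x_0-R,x_0+R]$ at these $O(1)$ points, $g$ is affine with some slope $s$ on each piece, so $\mathrm{osc}_I g=|s|\,r$ on every mesh interval $I$ inside that piece.

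The decisive point — and the step I expect to be the main obstacle — is to exploit the \emph{vertical} constraint imposed by the cube $Q(p,R)$, not merely its horizontal width. On $E:=\{t:\ |t-x_0|\le R,\ |f(t)-f(x_0)|\le R\}$ we have $|g(t)-g(x_0)|\le |f(t)-f(x_0)|+2\|h\|_\infty\lesssim R$, so on each affine piece (slope $s$, and set $\sigma=\max(1,|s|)$) the set $E$ occupies a subinterval of length $\lesssim R/\sigma$. Counting boxes column by column gives, on each affine piece,
$$
\N_r\lesssim \Big(\tfrac{R}{\sigma r}+1\Big)\big(\tfrac{\mathrm{osc}_I g+\mathrm{osc}_I h}{r}+1\big)\lesssim \Big(\tfrac{b^m}{\sigma}+1\Big)\big(\sigma+m+1\big).
$$
When $\sigma\le b^m$ this is $\lesssim (m+2)b^m$; when $\sigma> b^m$ the horizontal extent $R/\sigma$ is below $r$, so $E$ meets only $O(1)$ strips and the cube's height $2R$ bounds each column by $2R/r+1\lesssim b^m$ boxes. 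In either case the dangerous factor $\sigma$ — which can be as large as $\asymp n$, hence unbounded relative to $R/r$ — cancels, and only the harmless logarithmic factor from $h$ survives. The subtlety I would guard against is exactly this interplay: a term-by-term oscillation estimate that ignores the cube's height produces a factor comparable to $n$, unbounded relative to $R/r$, and destroys the bound entirely; it is essential that a steep trend simultaneously drives the graph out of the cube, shrinking the horizontal range by the reciprocal factor. Summing over the $O(1)$ affine pieces then completes the estimate $\N_r(Q(p,R)\cap\g f)\lesssim (m+2)b^m$.
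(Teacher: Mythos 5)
Your upper bound is, in substance, the paper's own proof in different notation. The splitting $f=g+h$ with a further cut at $k=n+m$ is exactly the paper's decomposition into $H_n$, $H_{n,m}$ and the tail (handled there by the tube $S_{n+m}$ via Lemma~\ref{lem:cover}); your estimate $\mathrm{osc}_I\, h\lesssim (m+1)r$ is Lemma~\ref{lem:lips} plus the tail bound; and your ``decisive point'' --- that the cube's vertical constraint confines $E$, on an affine piece of slope $s$, to an interval of length $\lesssim R/\max(1,|s|)$, so that the potentially unbounded slope cancels --- is precisely the mechanism of Lemma~\ref{lem:tkey-new}, where the oscillation sum of the piecewise-linear $H_n$ over $D=\{x: |H_n(x)-y|\le 2\eta b^{-n}\}$ is controlled by total variation, i.e.\ by the height $\lesssim b^{-n}$ of the window rather than by slope times length. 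Your case split $\sigma\le b^m$ versus $\sigma>b^m$ and the paper's variation bookkeeping are cosmetically different routes to the same bound $\lesssim (m+\mathrm{const})\,b^m$, and the final absorption of the factor $m$ into $b^{\alpha m}$ for every $\alpha>1$ is identical.

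The lower bound, however, rests on a false principle. You justify $\dim_A \g f_{\bfc,b}\ge \dim_A \pi(\g f_{\bfc,b})=1$ by asserting that Assouad dimension does not increase under Lipschitz maps. Assouad dimension is bi-Lipschitz invariant but it is \emph{not} Lipschitz stable: orthogonal projections, which are $1$-Lipschitz, can strictly increase Assouad dimension (Fraser and Orponen constructed compact planar sets of Assouad dimension $0$ having projections of Assouad dimension $1$; this failure is discussed in the very reference \cite{F21} you cite). So this step, as written, is wrong. It is also unnecessary: by Eq.~\eqref{eq:dim_H<B<F} it suffices to note $\dim_A \g f_{\bfc,b}\ge \underline\dim_B \g f_{\bfc,b}\ge 1$, where the last inequality holds because lower box dimension, unlike Assouad dimension, is monotone under Lipschitz maps --- any $r$-cover of the graph projects to an $r$-cover of $[0,1]$, so $N_r(\g f_{\bfc,b})\gtrsim 1/r$. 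With that one-line repair your argument is complete and coincides with the paper's.
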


\begin{cor}\label{cor:assouad-old}
For each integer $b \geq2$, we have
$$\dim_A\g T_b=1.$$
\end{cor}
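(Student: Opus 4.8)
The lower bound is immediate: since $f_{\bfc,b}$ is a uniformly convergent series of continuous functions it is continuous, so its graph projects onto $[0,1]$ and hence $\dim_A\g f_{\bfc,b}\ge\underline{\dim}_B\g f_{\bfc,b}\ge 1$ by Eq.~\eqref{eq:dim_H<B<F}. Thus the whole content is the upper bound, and I would prove it through the mesh characterization: it suffices to produce a constant $C$ (depending only on $b$ and $\bfc$) so that for all $n,m\in\Z^+$ and every $x_0\in[0,1]$,
$$\N_{b^{-n-m}}\big(Q((x_0,f_{\bfc,b}(x_0)),b^{-n})\cap\g f_{\bfc,b}\big)\le C\,m\,b^{m}.$$
Since $m\,b^m\lesssim b^{\alpha m}$ for every $\alpha>1$, this forces $\dim_A\g f_{\bfc,b}\le\alpha$ for all $\alpha>1$, hence $\dim_A\g f_{\bfc,b}\le1$. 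Writing $\delta:=b^{-n-m}$ and $J:=[x_0-b^{-n},x_0+b^{-n}]$, I partition $J$ into at most $2b^m$ subintervals $J_i$ of length $\delta$; the standard graph box-counting estimate covers the part of $\g f_{\bfc,b}$ over $J_i$ by at most $\delta^{-1}\mathrm{osc}_{J_i}(f_{\bfc,b})+2$ mesh cubes, where $\mathrm{osc}$ denotes oscillation.

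The naive route fails, and explaining why pins down the real difficulty. Summing the per-column estimate over $J$ and using $\mathrm{osc}_{J_i}(f_{\bfc,b})\le\sum_k|c_k|\min\{b^k\delta,1\}$ together with the hypothesis $|c_k|\lesssim b^{-k}$, each frequency $0\le k\le n+m$ contributes one unit of $\delta$, giving $\delta^{-1}\sum_i\mathrm{osc}_{J_i}(f_{\bfc,b})\lesssim(n+m)b^m$. The factor $n+m=\log_b(1/r)$ is fatal when $R/r=b^m$ stays bounded while $r\to0$, so the heart of the matter is to replace $\log_b(1/r)$ by $\log_b(R/r)=m$. This genuinely cannot be done by any argument ignoring the vertical size of the cube, because for $|c_k|\asymp b^{-k}$ the modulus of continuity really is of order $\delta\log(1/\delta)$.

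The plan is to split at the scale of the cube, $f_{\bfc,b}=P_n+R_n$ with $P_n:=\sum_{k<n}c_k\phi(b^k\cdot)$ and $R_n:=\sum_{k\ge n}c_k\phi(b^k\cdot)$. The tail is uniformly small, $\|R_n\|_\infty\le\tfrac12\sum_{k\ge n}|c_k|\lesssim b^{-n}$, and only its frequencies $n\le k\le n+m$ are active at scale $\delta$, so $\mathrm{osc}_{J_i}(R_n)\lesssim\sum_{k\ge n}|c_k|\min\{b^k\delta,1\}\lesssim m\delta$, carrying only the factor $m$. The head $P_n$ is piecewise linear; since the breakpoints of $\phi(b^k\cdot)$ are spaced $\tfrac12 b^{-k}$, a direct count shows $P_n$ has only $O(1)$ linear pieces on any interval of length $2b^{-n}$. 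This is where the bounded height $2b^{-n}$ of $Q$ enters: on a linear piece of slope $s$, the graph of $P_n$, and hence (as $|R_n|\lesssim b^{-n}$) the part of $\g f_{\bfc,b}$ inside $Q$, is confined to an $x$-range of length $\lesssim\min\{b^{-n},b^{-n}/|s|\}$, so a steep piece leaves the cube at once and meets only $\lesssim b^m/\max\{1,|s|\}$ columns; on these columns $\sum_i\mathrm{osc}_{J_i}(P_n)\le\#\{\text{columns}\}\cdot|s|\delta\lesssim b^m\delta=b^{-n}$ irrespective of $s$.

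Combining the pieces over one linear piece of $P_n$ via $\mathrm{osc}_{J_i}(f_{\bfc,b})\le\mathrm{osc}_{J_i}(P_n)+\mathrm{osc}_{J_i}(R_n)$ yields at most $\lesssim b^m+m\,b^m$ cubes, and summing over the $O(1)$ pieces gives the desired bound $\lesssim m\,b^m$. The main obstacle, as flagged, is organizing the count around this cancellation: one must show that the excess oscillation responsible for the $\log(1/r)$ factor is produced entirely by steep low-frequency pieces that a ball of radius $R$ cannot contain, leaving only the harmless factor $\log(R/r)$. Finally, Corollary~\ref{cor:assouad-old} follows by taking $c_k=b^{-k}$, for which $\varlimsup_k b^k|c_k|=1<\infty$.
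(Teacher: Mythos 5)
Your proposal is correct and is essentially the paper's own argument: you split $f_{\bfc,b}$ at the frequency $n$ matching the cube's scale, use that the head is piecewise linear with $O(1)$ pieces on an interval of length $2b^{-n}$ so that the cube's height $2b^{-n}$ caps its contribution at $O(b^m)$ boxes (your slope-confinement count is the same estimate as the paper's variation bound on the set $D$ where $H_n$ stays near the centre height in Lemma~\ref{lem:tkey-new}), and use the Lipschitz bound on frequencies $n\le k<n+m$ to produce the factor $m$, absorbing frequencies above $n+m$ into an $O(\delta)$ error (the paper's tube $S_{n+m}$). The specialization $c_k=b^{-k}$ at the end is exactly how the paper deduces Corollary~\ref{cor:assouad-old} from Theorem~\ref{thm:assouad}.
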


In the case that $ab>1$,
the box dimension of graphs of $T_{a,b}$ \cite{B15,KMY84} is equal to
$$
	\dim_B \g T_{a,b}=2+\frac{\log a}{\log b}>1.
$$

\begin{cor}\label{cor:assouad}
For each integer $b\geq2$, 
$\dim_A\g T_{a,b}=1$
if and only if $0 \leq a \leq b^{-1} $.
\end{cor}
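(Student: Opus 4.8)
The plan is to deduce the corollary entirely from Theorem~\ref{thm:assouad} together with the box-dimension formula and the dimension chain recorded in Eq.~\eqref{eq:dim_H<B<F}. The key observation is that for $T_{a,b}$ the coefficients are $c_n=a^n$, so that $b^n|c_n|=(ab)^n$, and the hypothesis $\varlimsup_{k\to\infty}b^k|c_k|<\infty$ of Theorem~\ref{thm:assouad} becomes exactly the condition $ab\leq1$, that is, $a\leq b^{-1}$.

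For the sufficiency, assuming $0\leq a\leq b^{-1}$, I would split into two cases. When $0<a\leq b^{-1}$ we have $(ab)^n\leq1$ for all $n$, hence $\varlimsup_{k\to\infty}(ab)^k\leq1<\infty$, and Theorem~\ref{thm:assouad} immediately yields $\dim_A\g T_{a,b}=1$; note that the boundary value $a=b^{-1}$ is covered here, since the limsup is still finite. The degenerate endpoint $a=0$ falls outside the scope of Theorem~\ref{thm:assouad} and must be treated separately: here $T_{0,b}=\phi$ is piecewise linear, hence Lipschitz, so $x\mapsto(x,\phi(x))$ is a bi-Lipschitz embedding of $[0,1]$ into $\R^2$, and its graph inherits the Assouad dimension $1$ of the interval.

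For the necessity I would argue by contraposition: suppose $a>b^{-1}$, equivalently $ab>1$. Then the cited box-dimension formula gives $\dim_B\g T_{a,b}=2+\tfrac{\log a}{\log b}$, and since $b\geq2$ we have $\log b>0$, so the inequality $ab>1$ is equivalent to $\tfrac{\log a}{\log b}>-1$; consequently $\dim_B\g T_{a,b}>1$. Feeding this into the chain $\overline\dim_B F\leq\dim_A F$ from Eq.~\eqref{eq:dim_H<B<F} gives $\dim_A\g T_{a,b}\geq\dim_B\g T_{a,b}>1$, so the Assouad dimension strictly exceeds $1$ precisely when $a>b^{-1}$. Combining the two directions yields the stated equivalence.

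Since every ingredient is already in place, I do not expect a genuine obstacle here; this is essentially a bookkeeping consequence of the main theorem. The only points requiring care are the boundary value $a=b^{-1}$, which lies on the finite-limsup side and therefore still gives dimension $1$, and the degenerate endpoint $a=0$, which is handled by the elementary Lipschitz argument rather than by Theorem~\ref{thm:assouad}.
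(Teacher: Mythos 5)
Your proof is correct and follows exactly the route the paper intends: Theorem~\ref{thm:assouad} (with $b^k|c_k|=(ab)^k$ bounded when $a\leq b^{-1}$) gives the sufficiency, and the cited box-dimension formula $\dim_B\g T_{a,b}=2+\frac{\log a}{\log b}>1$ for $ab>1$ combined with Eq.~\eqref{eq:dim_H<B<F} gives the necessity. One tiny remark: the case $a=0$ does not actually fall outside Theorem~\ref{thm:assouad}, since then $c_k=0$ for $k\geq1$ and $\varlimsup_{k\to\infty}b^k|c_k|=0<\infty$, though your separate bi-Lipschitz argument is also perfectly valid.
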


\subsection*{Acknowledgements.} 
\medskip
The author would like to thank Prof. Huo-Jun Ruan and Prof. Yanqi Qiu for helpful discussions.

\section{The Assouad dimension of $T_{\bfc}$}
For the remainder of this paper, we fix integer $b \geq 2$.
Let $\bfc:=\{c_k\}_{k=0}^\infty$ be a sequence of real numbers such that
\begin{equation*}
	\varlimsup_{k \to \infty}  b^k|c_k |  <\infty.
\end{equation*}
Write $\eta=\max\big\{1,\varlimsup_{k \to \infty} b^k|c_k|\big\}$.

For any $n \in \Z^+$,
we define the partial sum sequences of $f_{\bfc,b}$ as
$$
	H_{n}(x):=\sum_{k=0}^{n-1} c_k \phi(b^k x), \quad x\in [0,1].
$$
For any $n,m \in \Z^+$,
we define another partial sum sequences of $f_{\bfc,b}$ as
$$
	H_{n,m}(x):=\sum_{k=n}^{n+m-1} c_k \phi(b^k x), \quad x\in [0,1].
$$
We denote by $S_{n}$ the set
$$
	S_{n}:=\big\{  (x,y):   x \in  [0,1] \mbox{ and } |H_{n}(x) -y | \leq \eta \cdot b^{-n}  \big\}.
$$
This section will explore the properties of these partial sums, which are essential for understanding the behavior of the Takagi function.

\begin{figure}[htbp]
\centering
\includegraphics[scale=0.4]{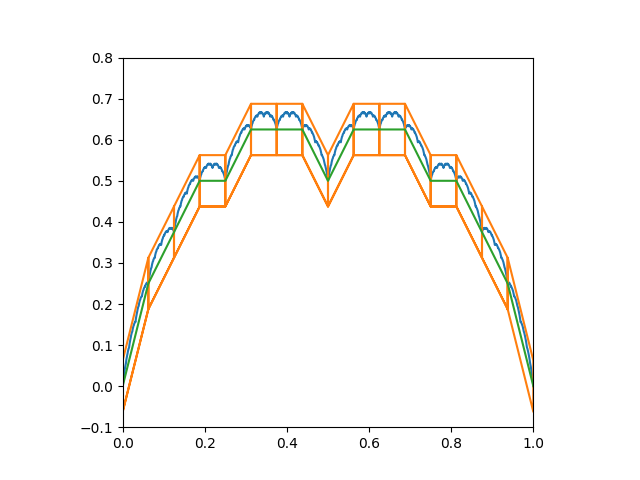}
\caption{Classical Takagi function $T$, $H_4$ and $S_{4}$.}
\end{figure}

\begin{lem}\label{lem:cover}
For any $n \in \mathbb{Z}^+$, we have 
$
	\g f_{\bfc,b} \subset S_{n}.
$

\end{lem}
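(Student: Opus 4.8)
The plan is to estimate the vertical distance between a graph point $\big(x, f_{\bfc,b}(x)\big)$ and the approximating partial sum $H_n$, and to show it never exceeds $\eta\, b^{-n}$, which is precisely the half-width of the band $S_n$. Concretely, I would fix $x \in [0,1]$ and write the residual as the tail of the defining series,
$$
f_{\bfc,b}(x) - H_n(x) = \sum_{k=n}^\infty c_k\, \phi(b^k x),
$$
so that the membership $\big(x, f_{\bfc,b}(x)\big) \in S_n$ is equivalent to the bound $\big|\sum_{k=n}^\infty c_k \phi(b^k x)\big| \leq \eta\, b^{-n}$.

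The first step is the elementary pointwise bound $0 \leq \phi(t) \leq \tfrac12$ for every real $t$, which gives $|c_k \phi(b^k x)| \leq \tfrac12 |c_k|$ uniformly in $x$. The second step controls $|c_k|$ through the hypothesis: since $\eta \geq \varlimsup_{k} b^k |c_k|$, one has $|c_k| \leq \eta\, b^{-k}$ for the relevant indices, so the tail is dominated by a geometric series,
$$
\Big| \sum_{k=n}^\infty c_k\, \phi(b^k x) \Big| \leq \frac{1}{2} \sum_{k=n}^\infty |c_k| \leq \frac{\eta}{2} \sum_{k=n}^\infty b^{-k} = \frac{\eta}{2}\cdot\frac{b^{-n}}{1 - b^{-1}}.
$$
The final step is to check that this constant collapses to $1$: since $\frac{1}{2(1 - b^{-1})} = \frac{b}{2(b-1)} \leq 1$ exactly when $b \geq 2$, the right-hand side is at most $\eta\, b^{-n}$, with equality at $b = 2$. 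This yields $|H_n(x) - f_{\bfc,b}(x)| \leq \eta\, b^{-n}$ for every $x$, hence $\g f_{\bfc,b} \subset S_n$, as claimed.

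The only delicate point is the passage from the $\varlimsup$ hypothesis to the termwise bound $|c_k| \leq \eta\, b^{-k}$: the inequality $b^k|c_k| \leq \eta$ is guaranteed by $\varlimsup_{k} b^k|c_k| \leq \eta$ only for large $k$, so one must verify that $\eta$ (possibly after absorbing the finitely many small indices, which merely enlarges implicit constants used later and does not affect the Assouad dimension) dominates $b^k|c_k|$ across the whole range $k \geq n$, or else establish the estimate for $n$ beyond the threshold and treat small $n$ by hand. Apart from this routine bookkeeping, the argument is a single geometric-series computation, and the role of the standing hypothesis $b \geq 2$ is precisely to force the factor $\frac{b}{2(b-1)}$ down to at most $1$.
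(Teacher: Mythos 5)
Your argument is essentially the paper's proof verbatim: bound the tail termwise by $\tfrac12|c_k|$, replace $|c_k|$ by $\eta b^{-k}$, sum the geometric series, and use $b\geq2$ to bring the constant $\frac{b}{2(b-1)}$ down to $1$.

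However, the ``delicate point'' you flag at the end is not routine bookkeeping: it is a genuine gap, and it is present in the paper's own proof as well. With $\eta=\max\{1,\varlimsup_{k\to\infty} b^k|c_k|\}$, the termwise bound $|c_k|\leq \eta b^{-k}$ need not hold for \emph{any} $k$, let alone all $k\geq n$: if, say, $b^k|c_k|=1+(k+1)^{-1}$ for all $k$, then $\eta=1$ yet every term exceeds $\eta$ (the limsup hypothesis only yields $b^k|c_k|\leq\eta+\vep$ for all large $k$). Worse, the lemma as stated is simply false for small $n$: take $b=2$, $c_5=100$ and $c_k=0$ otherwise, so that $\varlimsup_k b^k|c_k|=0$ and $\eta=1$; then $H_1\equiv 0$ while $f_{\bfc,b}(1/64)=100\,\phi(1/2)=50$, so $(1/64,50)\in\g f_{\bfc,b}\setminus S_1$. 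The clean repair --- which your parenthetical gestures at but does not carry out --- is to redefine $\eta:=\max\{1,\sup_{k\geq0} b^k|c_k|\}$; this supremum is finite exactly because the limsup is finite (all but finitely many terms are bounded by the limsup plus one, and the exceptional terms are finitely many finite numbers), the termwise bound then holds for every $k$, and your geometric-series computation closes the proof for every $n\in\Z^+$. Since the remainder of the paper uses $\eta$ only as a finite constant (e.g.\ in the bound $(10\eta+m\eta+4)b^m$ of the covering lemma), this change propagates harmlessly and Theorem~\ref{thm:assouad} is unaffected. The alternative you mention --- keeping the limsup, proving the lemma with an enlarged constant only for $n$ beyond a threshold $N_0(\bfc)$, and handling small $n$ separately in the Assouad estimate --- can also be made to work, but until one of these amendments is made, the inequality $\sum_{k\geq n}|c_k|/2\leq\sum_{k\geq n}\eta/(2b^k)$ in your proof (and in the paper's) is unjustified.
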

\begin{proof}
Notice that $\phi(t) \leq 1/2$ for all $ t \in \R$.
Choose an arbitrary $x \in [0,1]$,
we have
$$ 
	\big|f_{\bfc,b}(x)-H_{n}(x)\big|
	=\bigg|\sum_{k=n}^\infty  c_k \phi(b^k x)\bigg|
	\leq \sum_{k=n}^\infty \frac{|c_k|}{2}
	\leq \sum_{k=n}^\infty \frac{\eta}{2b^k}
	= \frac{\eta \cdot b^{-n}}{2(1-\frac{1}{b})}
	 \leq \eta \cdot  b^{-n}. 
$$
Thus, $(x,f_{\bfc,b}(x)) \in S_{n}$.
For the arbitrariness of $x$, we have completed the proof.
\end{proof}

\begin{lem}\label{lem:linear}

For any $n \in \Z^+$ and $1 \leq i \leq 2r^n$, 
$H_n$ is  is linear on the interval $[\frac{i-1}{2 b^n} , \frac{i}{2 b^n} ]$.

\end{lem}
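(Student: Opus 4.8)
The plan is to exploit the fact that $\phi$ is piecewise affine and that $H_n$ is a finite linear combination of rescaled copies of $\phi$. First I would record that $\phi(t)=\dist(t,\Z)$ is continuous and affine on every interval of the form $[\frac{j}{2},\frac{j+1}{2}]$ with $j\in\Z$; its only points of non-differentiability (its ``kinks'') are the half-integers, i.e.\ the points of $\frac12\Z$. Consequently, for each fixed $k$, the function $x\mapsto\phi(b^k x)$ is affine on every interval whose interior contains no point of $b^{-k}\cdot\frac12\Z=\frac{1}{2b^k}\Z$.

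Next I would combine the $n$ terms. Since $H_n(x)=\sum_{k=0}^{n-1}c_k\phi(b^k x)$ is a finite sum, it is affine on any interval whose interior avoids the union $\bigcup_{k=0}^{n-1}\frac{1}{2b^k}\Z$ of the kink sets of the individual summands. Because $\frac12\Z\subset\frac{1}{2b}\Z\subset\cdots\subset\frac{1}{2b^{n-1}}\Z$, this union is exactly $\frac{1}{2b^{n-1}}\Z$; in particular $H_n$ is affine on each interval $[\frac{j}{2b^{n-1}},\frac{j+1}{2b^{n-1}}]$.

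Finally I would pass to the finer mesh of size $\frac{1}{2b^n}$ that appears in the statement. The key numerical observation is that the endpoints $\frac{i}{2b^n}$ of the claimed intervals refine the coarse grid $\frac{1}{2b^{n-1}}\Z$: writing $\frac{1}{2b^{n-1}}=b\cdot\frac{1}{2b^n}$, the grid points of $\frac{1}{2b^{n-1}}\Z$ are precisely those $\frac{i}{2b^n}$ with $i$ a multiple of $b$. Since $b\geq2$, two consecutive indices $i-1$ and $i$ cannot straddle a multiple of $b$ in their open interior, so the open interval $(\frac{i-1}{2b^n},\frac{i}{2b^n})$ contains no point of $\frac{1}{2b^{n-1}}\Z$, hence no kink of $H_n$. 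By continuity of $H_n$ this upgrades to affineness on the closed interval, giving linearity of $H_n$ on $[\frac{i-1}{2b^n},\frac{i}{2b^n}]$ for every $1\le i\le 2b^n$.

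I do not expect a genuine obstacle here; the statement is essentially a bookkeeping consequence of the self-similar kink structure of $\phi$. The only point demanding care is the direction of refinement: one must check that the highest-frequency term ($k=n-1$) governs the coarsest admissible mesh, and then verify that the stated mesh of size $\frac{1}{2b^n}$ is a genuine refinement of it, which is exactly where the hypothesis $b\ge 2$ is used.
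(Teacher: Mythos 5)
Your proposal is correct and takes essentially the same route as the paper: both arguments reduce to the fact that each summand $\phi(b^k x)$ has its kinks on the grid $\frac{1}{2b^k}\Z$, and that the open interval $\big(\frac{i-1}{2b^n},\frac{i}{2b^n}\big)$ meets none of these grids because $b^{n-k}$ is a positive integer and no integer lies strictly between $i-1$ and $i$. The only cosmetic differences are that you first merge the kink sets into the single finest grid $\frac{1}{2b^{n-1}}\Z$ while the paper checks each frequency $k$ separately, and that the straddling step really uses only the integrality of $b$ rather than the hypothesis $b\ge 2$ as you claim.
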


\begin{proof}

Let
$x_1=( i-1)/(2b^n)$ and $x_2=i/(2b^n)$.
Fix integer $0 \leq k \leq n-1$.
From
\[
  b^k x_1=\frac{i-1}{2b^{n-k}} \mbox{ and } b^k x_2=\frac{i}{2b^{n-k}},
\]
we observe that there is no point $x \in (x_1,x_2)$ such that $b^k x=j/(2b^{n-k})$ for some $j\in \Z$. 
Combining this with $n-k \geq 1$, we find that $0<\phi(b^k x) < 1/2$ for all $x\in (x_1,x_2)$. 
Summing over $k$ from $0$ to $n-1$,
$$
	H_n(x)=\sum_{k=0}^{n-1} c_k \phi(b^k x),
$$
is linear on the interval 
 $[x_1,x_2]$.  
Thus, the proof is complete.
\end{proof}

\begin{lem}\label{lem:lips}

For any $  n,m \in \Z^+$, function $H_{n}$ and $H_{n,m}$ are Lipschitz functions. More precisely, for any $x_1,x_2 \in [0,1]$, we have
$$
	\big|H_{n}(x_1)-H_{n}(x_2)\big|\leq n \eta |x_1-x_2|
	 \mbox{ and } 
	\big|H_{n,m}(x_1)-H_{n,m}(x_2)\big|\leq m \eta |x_1-x_2|.
$$
\end{lem}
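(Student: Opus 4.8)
The plan is to reduce both estimates to a single elementary input: the nearest-integer distance $\phi$ is $1$-Lipschitz. Granting this, each summand $c_k\phi(b^k\,\cdot)$ becomes a Lipschitz function whose Lipschitz constant I can bound by $\eta$ (via the coefficient estimate $b^k|c_k|\leq\eta$), and the two claimed inequalities then follow simply by adding up the contributions of the $n$ terms of $H_n$ (respectively the $m$ terms of $H_{n,m}$). So the whole argument is a triangle-inequality computation once the $1$-Lipschitz property and the uniform coefficient bound are in place.

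First I would establish that $\phi$ is $1$-Lipschitz. Since $\phi(t)=\dist(t,\Z)$ is the distance to the fixed set $\Z$, the reverse triangle inequality gives $|\phi(s)-\phi(t)|\leq|s-t|$ for all $s,t\in\R$: for the integer $j$ realizing $\phi(t)$ one has $\phi(s)\leq|s-j|\leq|s-t|+\phi(t)$, and symmetrically. Composing with the dilation $x\mapsto b^k x$ then yields
$$
	\big|\phi(b^k x_1)-\phi(b^k x_2)\big|\leq b^k\,|x_1-x_2|,\qquad x_1,x_2\in[0,1].
$$
Next I would combine this with the coefficient bound. For $H_n$ the index $k$ runs from $0$ to $n-1$, so by the triangle inequality
$$
	\big|H_n(x_1)-H_n(x_2)\big|
	\leq\sum_{k=0}^{n-1}|c_k|\,\big|\phi(b^k x_1)-\phi(b^k x_2)\big|
	\leq\sum_{k=0}^{n-1}\big(b^k|c_k|\big)\,|x_1-x_2|
	\leq n\eta\,|x_1-x_2|,
$$
using $b^k|c_k|\leq\eta$ in the last step. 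The estimate for $H_{n,m}$ is identical except that $k$ now ranges over the $m$ indices from $n$ to $n+m-1$, producing the factor $m\eta$ in place of $n\eta$.

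There is no deep obstacle here; the only point requiring care is the uniform coefficient bound $b^k|c_k|\leq\eta$ invoked in the final inequality of each chain. The definition of $\eta$ controls $b^k|c_k|$ through a $\varlimsup$, which a priori only governs the tail, whereas $H_n$ genuinely involves the low-index terms (down to $k=0$). I would therefore check at the outset that $\{b^k|c_k|\}$ is bounded by $\eta$ for \emph{every} $k$, not merely for large $k$ — the finiteness of the $\varlimsup$ already forces the whole sequence $\{b^k|c_k|\}$ to be bounded, and the constant $1$ in the definition of $\eta$ guarantees $\eta\geq 1$ — so that the replacement $b^k|c_k|\mapsto\eta$ is legitimate in each summand. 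Once this uniform bound is secured, the two displays above complete the proof.
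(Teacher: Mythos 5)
Your overall route is the same as the paper's: the reverse triangle inequality shows $\phi=\dist(\cdot,\Z)$ is $1$-Lipschitz, composing with $x\mapsto b^kx$ makes each summand Lipschitz with constant $b^k|c_k|$, and summing the $n$ (resp.\ $m$) terms via the triangle inequality gives the stated bounds. The paper's proof is exactly this computation.

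However, the step you yourself singled out as the only delicate point is where your argument breaks down — your proposed resolution is a non sequitur. From "the sequence $\{b^k|c_k|\}$ is bounded" and "$\eta\geq 1$" it does not follow that $b^k|c_k|\leq\eta$ for every $k$: the sequence could be bounded only by a constant much larger than $\eta$. Indeed, with the paper's definition $\eta=\max\big\{1,\varlimsup_{k\to\infty}b^k|c_k|\big\}$, the limsup says nothing about the initial terms. Take $b=2$, $c_0=10$, and $c_k=0$ for $k\geq1$; then $\eta=1$, while $H_1(x)=10\,\phi(x)$ has Lipschitz constant $10$, so $|H_1(x_1)-H_1(x_2)|\leq n\eta|x_1-x_2|=|x_1-x_2|$ fails on $[0,1/2]$. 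In other words, with this $\eta$ the lemma itself is false, and no argument can close the gap. To be fair, the paper's own proof commits the identical unjustified step, writing $|c_kb^k(x_1-x_2)|\leq\eta|x_1-x_2|$ for all $0\leq k\leq n-1$ without comment — you at least flagged the issue. The correct repair is to replace $\eta$ by a genuinely uniform constant such as $\eta'=\max\big\{1,\sup_{k\geq0}b^k|c_k|\big\}$, which is finite precisely because the limsup is finite; with $\eta'$ in place of $\eta$, your two displayed chains of inequalities (and the rest of the paper's argument, which only needs some finite uniform constant) go through verbatim.
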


\begin{proof}
For any $t_1,t_2 \in \R$, we have 
$$
	\big|\phi(t_1)-\phi(t_2)\big| =\big|\dist(t_1,\Z)-\dist(t_2,\Z)\big| \leq \big|\dist(t_1,t_2)\big|=|t_1-t_2|.
$$
Hence, for any $k \in \Z^+$ and $x_1,x_2 \in [0,1]$, we have
\begin{align*}
	\big| c_k\phi ( b^k x_1 )-c_k\phi ( b^k x_2)    \big| 
	 \leq \big| c_k b^k (x_1 - x_2 )    \big|
	 \leq  \eta |x_1-x_2|   .
\end{align*}
Summing over $k$ from $0$ to $n-1$,
$$
	\big| H_n(x_1)-H_n(x_2)\big|
	=\bigg|\sum_{k=0}^{n-1} c_k \phi(b^k x)- c_k \phi(b^k x)\bigg|
	\leq \sum_{k=0}^{n-1} \big| c_k b^k (x_1 - x_2 )    \big|
	\leq n \eta |x_1 - x_2 |.
$$
Similarly, summing over $k$ from $n$ to $n+m-1$,
$$
	\big| H_{n,m}(x_1)-H_{n,m}(x_2)\big|
	\leq \sum_{k=n}^{n+m-1} \big| c_k b^k (x_1 - x_2 )    \big|
	\leq m \eta |x_1 - x_2 |.
$$
\end{proof}

Let $O(g,E)=\sup_{x,x' \in E } \big|g(x)-g(x')\big|$ be the \emph{oscillation} of the function $g$ on set $E$.
Let $O(g,\emptyset)=0$ by default.

We can quickly make connection between $\N_r(\g g)$ and the oscillation of $g$, which is widely used in obtaining the box dimension of the graphs of continuous functions.
Similar results can be find in \cite{B15,F90}.
\begin{lem}\label{lem:osci}
Let $d \in \R$ and $r >0 $.
Assume $g$ is a continuous function defined on $[d,d+r]$, then we have
$$
	\N_r(\g g ) \leq O\big(g,[d,d+r]\big)/r+2.
$$
\end{lem}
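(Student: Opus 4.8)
The plan is to convert the combinatorial quantity $\N_r(\g g)$ into a count of how many horizontal rows of the $r$-mesh the graph is able to reach, exploiting that the domain $[d,d+r]$ has length exactly $r$, which is the width of a single mesh column. First I would record the elementary fact that, since $g$ is continuous on the compact interval $[d,d+r]$, it attains a minimum value $m$ and a maximum value $M$ there, and by the definition of oscillation $\omega:=O(g,[d,d+r])=M-m$. Consequently the whole graph $\g g$ is confined to the rectangle $[d,d+r]\times[m,M]$, whose height is $\omega$ and whose horizontal width is exactly $r$.

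Next I would view the graph as lying over a single column of the mesh. Because $[d,d+r]$ has length $r$ (one column width), every mesh cube that meets $\g g$ shares the same horizontal index and can differ only in its vertical index $k$, the relevant cubes being the $[m_1 r,(m_1+1)r]\times[kr,(k+1)r]$ sitting in that one column. Hence $\N_r(\g g)$ is bounded above by the number of rows $[kr,(k+1)r]$, $k\in\Z$, whose intersection with the range $[m,M]$ is nonempty. The final step is the routine count of those rows: a row meets $[m,M]$ exactly when $m/r-1\le k\le M/r$, and the number of integers $k$ in this range is at most $(M-m)/r+2=\omega/r+2$. Chaining the three observations gives $\N_r(\g g)\le O(g,[d,d+r])/r+2$, as desired.

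The main point requiring care, rather than a genuine obstacle, is the bookkeeping that pins down the additive constant $2$: one must account for the two boundary rows that the interval $[m,M]$ may straddle (which produces the $+2$ rather than a larger constant), and one must justify that the graph can be placed over a single mesh column so that no extra horizontal factor appears. Beyond this boundary accounting the argument is a standard box-counting estimate, and I expect no substantive difficulty; it serves merely as the tool linking $\N_r$ to the oscillation that will later be combined with the Lipschitz and linearity estimates to bound the local covering numbers of $\g f_{\bfc,b}$.
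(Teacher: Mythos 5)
The paper gives no proof of this lemma at all---it is stated bare, with a pointer to similar results in \cite{B15,F90}---so your argument stands on its own; it is the standard box-counting estimate. But it contains a genuine gap: the assertion that every $r$-mesh cube meeting $\g g$ ``shares the same horizontal index'' is false for general $d\in\R$. The mesh cubes in the paper's definition are anchored at the origin ($m_1\in\Z$), while $d$ is an arbitrary real number; whenever $d\notin r\Z$, the interval $[d,d+r]$ straddles \emph{two} mesh columns. This is not boundary bookkeeping, because the stated inequality itself then fails. Take $r=1$, $d=1/2$, and let $g$ be the tent function with $g(1/2)=g(3/2)=1/2$, $g(1)=5/2$, linear on each half. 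Then $O(g,[d,d+1])=2$, so the claimed bound is $4$; yet the graph contains a point interior to each of the six squares $[0,1]\times[k,k+1]$ and $[1,2]\times[k,k+1]$, $k=0,1,2$, so $\N_1(\g g)=6$. In the unaligned case the best general bound from your method is $\N_r(\g g)\leq 2\,O(g,[d,d+r])/r+4$.

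The repair is to add the hypothesis that $d$ is an integer multiple of $r$: then $[d,d+r]$ is exactly one mesh column, its closed cubes also cover the right edge $x=d+r$, and your min/max confinement plus row count ($m/r-1\leq k\leq M/r$, at most $\omega/r+2$ integers) completes the proof exactly as you wrote it. This aligned version is all the paper ever uses: in Lemma~\ref{lem:tkey-new} the lemma is applied with $r=b^{-n-m}$ over the intervals $I_j$, whose endpoints are integer multiples of $b^{-n-m}$, and likewise in the proof of Theorem~\ref{thm:assouad}. (Alternatively, one could read the paper's phrase ``all possible $\delta$-meshs'' as permitting translated grids, in which case your remark that the graph ``can be placed over a single mesh column'' becomes a legitimate choice of grid; but under the origin-anchored definition actually written in the paper, no such choice is available, and either the alignment hypothesis or the doubled constant is genuinely needed.)
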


  \begin{lem}\label{lem:tkey-new}

For any $n \in \mathbb{Z}^+\cup \{0 \}$, $m \in \Z^+$, $1 \leq i \leq b^n$, and $y \in \R$, we have
$$
	\N_{b^{-n-m}} \bigg(S_{n+m} \cap \Big(\Big[\frac{i-1}{b^n}, \frac{i}{b^n}\Big] \times \Big[y-\frac{\eta}{b^{n}},y+\frac{\eta}{b^{n}}\Big] \Big) \bigg)  \leq  (10\eta +m \eta +4  )  b^m.
$$
\end{lem}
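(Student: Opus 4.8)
The plan is to cover the box by exploiting that $H_n$ is piecewise affine while $H_{n,m}$ is Lipschitz, and then to count $b^{-n-m}$-mesh cubes column by column. First I would write the base of the box as a union of two half-intervals,
$[\frac{i-1}{b^n},\frac{i}{b^n}]=[\frac{2i-2}{2b^n},\frac{2i-1}{2b^n}]\cup[\frac{2i-1}{2b^n},\frac{2i}{2b^n}]$,
so that by Lemma~\ref{lem:linear} the function $H_n$ is affine on each half $P$; write $H_n(x)=sx+c$ there. Since $H_{n+m}=H_n+H_{n,m}$ and $|H_{n,m}|\le\eta b^{-n}$ (the same geometric sum as in Lemma~\ref{lem:cover}), any point of $S_{n+m}$ lying in the box satisfies $|H_n(x)-y|\le|H_{n+m}(x)-y|+|H_{n,m}(x)|\le(\eta b^{-n}+\eta b^{-n-m})+\eta b^{-n}\le 3\eta b^{-n}$. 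Thus the relevant $x$ are confined to the solution set of $|sx+c-y|\le 3\eta b^{-n}$, an interval of length $6\eta b^{-n}/|s|$.

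Next I would set $\delta=b^{-n-m}$ and cut each half $P$ into the mesh columns of width $\delta$. A column contributes cubes only if it meets the relevant $x$-set, so the number of contributing columns on $P$ is at most $\tfrac{6\eta b^m}{|s|}+1$ (and trivially at most $\tfrac{b^m}{2}+1$). Inside one column, $H_{n+m}$ is affine-plus-Lipschitz, so by Lemmas~\ref{lem:linear} and~\ref{lem:lips} its oscillation is at most $(|s|+m\eta)\delta$; feeding this into Lemma~\ref{lem:osci} and adding the few extra rows ($\le 2\eta+2$, using $\eta\ge1$) needed to fatten the graph into the tube $S_{n+m}$ bounds the cubes in that column by $|s|+m\eta+2\eta+4$. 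At the same time, because the box has height $2\eta b^{-n}$, no column can cost more than $2\eta b^m+1$ cubes. Hence each contributing column costs at most $\min\{|s|+m\eta+2\eta+4,\ 2\eta b^m+1\}$.

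The final step is to multiply the column count by the per-column cost, sum the two halves, and split on the size of $|s|$. When $|s|$ is of constant order (say $|s|\lesssim m\eta$) I would use all $\le\tfrac{b^m}{2}+1$ columns, each costing $O(m\eta+\eta)$, giving $O((m\eta+\eta)b^m)$. When $|s|$ is large I would use the relevant-column bound $\tfrac{6\eta b^m}{|s|}+1$: for moderate slope the per-column cost $\approx|s|$ cancels the $1/|s|$ in the column count, and for very large slope the box-height cap $2\eta b^m+1$ bounds the cost while the column count drops to $O(1)$. In every regime the dependence on $n$ disappears and the total is $\lesssim(\eta+m\eta)b^m$; tracking the constants carefully through the two halves yields the stated bound $(10\eta+m\eta+4)b^m$.

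The main obstacle is precisely the slope: on a half-interval $H_n$ can have slope as large as $n\eta$, so a naive column-by-column oscillation sum would leave an unwanted factor $n$. The resolution is the trade-off made explicit above — a steep $H_n$ forces the graph of $H_{n+m}$ to leave the vertical band of the box after only $\approx\eta b^m/|s|$ columns, while within any single column the box height caps the count at $2\eta b^m+1$ — so that ``few columns $\times$ many rows'' and ``many columns $\times$ few rows'' balance to an estimate depending only on $m$ and $\eta$. Keeping this balance tight enough to reach the clean constants $10\eta+m\eta+4$, rather than a larger absolute constant, is the one place where the estimates must be done with care rather than crudely.
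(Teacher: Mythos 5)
Your proposal is correct in substance and follows the same strategy as the paper's proof: split $[\frac{i-1}{b^n},\frac{i}{b^n}]$ into the two halves on which $H_n$ is linear (Lemma~\ref{lem:linear}), restrict attention to the band of $x$ where $H_n(x)$ is within $O(\eta b^{-n})$ of $y$, control $H_{n,m}$ by its Lipschitz constant $m\eta$ (Lemma~\ref{lem:lips}), and count mesh cubes column by column via Lemma~\ref{lem:osci}. The one real difference is how the slope trade-off is settled. The paper never mentions the slope at all: since $H_n$ is linear, hence monotone, on each half $J_i$, the sum of its oscillations over the columns meeting $D=\{x:|H_n(x)-y|\le 2\eta b^{-n}\}$ is bounded by the variation $\Var(H_n,J_i\cap D)\le 4\eta b^{-n}$, and multiplying by $b^{n+m}$ yields the slope contribution $8\eta b^m$ in one line. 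Your explicit regime analysis (few steep columns times capped height, versus many flat columns times small oscillation) is exactly this variation estimate unpacked, and it does work; but the unpacking is what costs you the constants. Because you fatten $\g H_{n+m}$ into $S_{n+m}$ \emph{before} restricting to the band, your band has width $3\eta b^{-n}$ rather than the paper's $2\eta b^{-n}$, and summing your min--min bookkeeping over both halves lands at roughly $(14\eta+m\eta+4)b^m$ or worse, not $(10\eta+m\eta+4)b^m$; so the closing claim that careful tracking ``yields the stated bound'' is optimistic as written. This is immaterial for Theorem~\ref{thm:assouad}, since any bound of the form $(C_1\eta+m\eta+C_2)b^m$ with absolute constants is absorbed into $C_\vep$, but if you want the lemma's exact constant you should do what the paper does: cover $\g H_{n+m}\cap R_j$ first (band of width $2\eta b^{-n}$, handled by monotonicity), and only afterwards add the $2\eta+2$ cubes per column needed to pass from the graph to the thickened set $S_{n+m}$.
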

\begin{proof}

Fix $n \in \mathbb{Z}^+\cup \{0 \}$, $m \in \Z^+$, and $1 \leq i \leq b^n$.
For each $1 \leq j\leq b^m$, we write 
$$
	I_{j}=\Big[\frac{i-1}{b^n}+\frac{j-1}{b^{n+m}}    ,\frac{i-1}{b^n}+\frac{j}{b^{n+m}}   \Big].
$$
Define
$$
 	R_j=I_{j} \times [y  -  \eta  \cdot  b^{-n},y+ \eta  \cdot  b^{-n}].
$$
By the definition of $S_{n+m}$, we can see that if 
$$
  \g H_{n+m}\cap R_j \subset I_{j}\times \Big[ \frac{p}{b^{n+m}}, \frac{q}{b^{n+m}}\Big]
$$
for some $p,q\in \R$, then 
$$
  S_{n+m}\cap R_j \subset I_{j}    \times \Big[ \frac{p-\eta}{b^{n+m}}, \frac{q+\eta}{b^{n+m}}\Big].
$$
Hence,  
$$
	\N_{b^{-n-m}}(S_{n+m}\cap R_j) \leq 2+2\eta+ \N_{b^{-n-m}}(\g H_{n+m} \cap R_j).
$$
Let $I=[\frac{i-1}{b^n}    ,\frac{i-1}{b^n}] $ and $\wdt{R}=I \times [y-b^{-n},y+b^{-n}]$.
By summing $j$ over $1$ to $b^m$,
\begin{equation}\label{eq:Lem4.7-1}
  \N_{b^{-n-m}}(S_{n+m}\cap \wdt{R}) \leq (2+2\eta)b^m+ \sum_{j=1}^{b^m} \N_{b^{-n-m}}(\g H_{n+m} \cap R_j).
\end{equation}

Fix $y \in \R$.
We write 
$$D:=D(y,n)=\big\{x\in I:  |H_{n}(x)-y| \leq 2\eta\cdot b^{-n}  \big\}.$$
Let $J_1=[\frac{i-1}{b^n}    ,\frac{2i-1}{2b^n}] $ and $J_2=[\frac{2i-1}{2b^n}    ,\frac{i}{b^n}] $. 
It is clear that 
$$I=J_1 \cup J_2 =\bigcup_{j=1}^{b^m}  I_j.    $$
From Lemma~\ref{lem:linear}, $H_{n}$ is linear on both $J_1$ and $J_2$.
Thus, we have
\begin{align}
	&\sum_{j=1}^{b^m} O\big(H_{n}, I_{j} \cap D \big) \label{eq:count-1}  \\
	\leq&\Var\big(H_{n},J_1\cap D\big)+\Var\big(H_{n}, J_2 \cap D\big)\\
	\leq & 4\eta\cdot b^{-n}+ 4\eta\cdot b^{-n} = 8\eta\cdot b^{-n},
\end{align}
where $\Var(g,E)$ represents the variation of $g$ on $E$.
From Lemma~\ref{lem:lips}, $H_{n,m}$ is Lipschitz on $I$.
Moreover, we have
\begin{equation}\label{eq:count-2}
	\sum_{j=1}^{b^m} O\big(H_{n,m}, I_{j}  \big)
	\leq \sum_{j=1}^{b^m} m  \eta   \cdot  | I_{j} |
	=m\eta  \cdot  |I|=  m \eta \cdot b^{-n}.
\end{equation}


From Lemma~\ref{lem:cover}, for any $ x \notin D$, we have 
$$
	\big| H_{n+m}(x) -y\big|\geq \big| H_{n}(x)-y \big|-\big| H_{n}-H_{n-m}(x) \big| > 2\eta \cdot b^{-n}-\eta \cdot b^{-n}=\eta \cdot b^{-n},
$$
which implies that
$	
	H_{n+m}(x) \notin [y-\eta \cdot b^{-n},y+ \eta \cdot b^{-n}].
$
Hence, we have
$$
	\g H_{n+m} \cap R_j \subset   \g H_{n+m} \cap \big( ( I_j\cap D)\times \R\big)   
$$
Combining with Lemma~\ref{lem:osci}, we have
\begin{align*}
	& \N_{b^{-n-m}}(\g H_{n+m} \cap R_j)  \\
	\leq & O( H_{n+m},I_j\cap D)/b^{-n-m}+2   \\ 
	\leq &b^{n+m} \big( O( H_{n},I_j\cap D)+O( H_{n,m},I_j\cap D)\big)+2  .
\end{align*}
Combining with Eq.~\eqref{eq:count-1} and Eq.~\eqref{eq:count-2}, by
summing $j$ over $1$ to $b^m$, we have
\begin{align*}
 &\sum_{j=1}^{b^m} \N_{b^{-n-m}}(S_{n+m} \cap R_j) \\
  \leq & (2\eta+2)  b^m+2b^m+ \sum_{j=1}^{b^m} b^{n+m}\Big(  O (H_{n}, I_{j} \cap D ) + O(H_{n,m}, I_j \cap D ) \Big) \\
  \leq & (2\eta+4) b^m+  b^{n+m}(  8\eta \cdot b^{-n}+m\eta \cdot b^{-n})\\
  =&(10\eta +m \eta +4  )b^m.
\end{align*}
This completes the proof of the lemma.
\end{proof}

\begin{proof}[Proof of Theorem~\ref{thm:assouad}]

For any $x_0 \in [0,1]$ and $n \in \mathbb{Z}^+$,
there exists $0 \leq i \leq b^n$ such that
$$\Big[x_0-\frac{1}{b^n},x_0+\frac{1}{b^n}\Big]  \subset \Big[ \frac{i-1}{b^n},\frac{i+2}{ b^n} \Big]. $$
Let $y_0=f_\bfc(x_0)$, then we have
$$
	Q\big((x_0,y_0),b^{-n}\big) 
	\subset \Big[ \frac{i-1}{b^n},\frac{i+2}{ b^n} \Big] \times  \Big[y_0- \frac{\eta}{b^n},y_0+\frac{\eta}{ b^n} \Big].
$$

From Lemma~\ref{lem:tkey-new}, for any $m \in \mathbb{Z}^+$, we have
\begin{align*}
	&\N_{b^{-n-m}} \Big( \g f_{\bfc,b} \cap Q\big((x_0,y_0),b^{-n}\big) \Big) \\
	\leq& \sum_{\ell=i}^{i+2} \N_{b^{-n-m}} \bigg( S_{n+m} \cap \Big(\Big[\frac{\ell-1}{b^n},\frac{\ell}{b^n}\Big] \times \Big[y_0- \frac{\eta}{b^n},y_0+\frac{\eta}{b^n} \Big] \Big) \bigg) \\
	\leq &  3(10\eta +m \eta +4  )  b^m.
\end{align*}

For any $\vep>0$, there exists a constant that $C_\vep>0$ such that 
$$C_\vep b^{m\vep} \geq 3(10\eta +m \eta +4  )  ,\quad \forall m \in \Z^+.$$ 
Thus,
$$\N_{b^{-n-m}} \Big( \g f_{\bfc,b} \cap Q\big( x,b^{-n}\big) \Big) \leq C_\vep b^{(1+\vep)m},$$
for all $x \in \g T_{a,b}$ and $n,m \in \Z^+$.
This implies $1+\vep$ lies in the following set:
$$
	\big\{ \alpha:  \mbox{ for all } n,m \in \Z^+ \mbox{ and } x \in \g f_{\bfc,b} ,
\N_{b^{-n-m}}\big(Q(x,b^{-n})\cap  \g f_{\bfc,b}  \big) \lesssim  b^{\alpha m}  \big\}.
$$
Therefore, we have $ \dim_A \g  f_{\bfc,b} \leq  1 +\vep$.
For the arbitrariness of $\vep$, it follows that
$$\dim_A \g f_{\bfc,b}  \leq 1.$$
On the other hand,
it is clear that $\dim_A \g  f_{\bfc,b} \geq \underline\dim_B \g  f_{\bfc,b} \geq 1$.
Thus, $$\dim_A \g  f_{\bfc,b}=1.$$
\end{proof}

\begin{exam}
The signal Takagi function\cite{A13}, with the following form
$$
	f_{\mathbf r}(x):=\sum_{n=0}^\infty \frac{r_n}{2^n} \phi(2^n x), \quad x \in [0,1],
$$
where $r_n =\pm 1$ for each $n$.
We have $\eta=1$.
The Assouad dimension of graph of $f_{\mathbf r}$ is one, that is, 
$$ \dim_A \g f_{\mathbf r}=1.$$

\begin{figure}[htbp]
\centering
\includegraphics[scale=0.4]{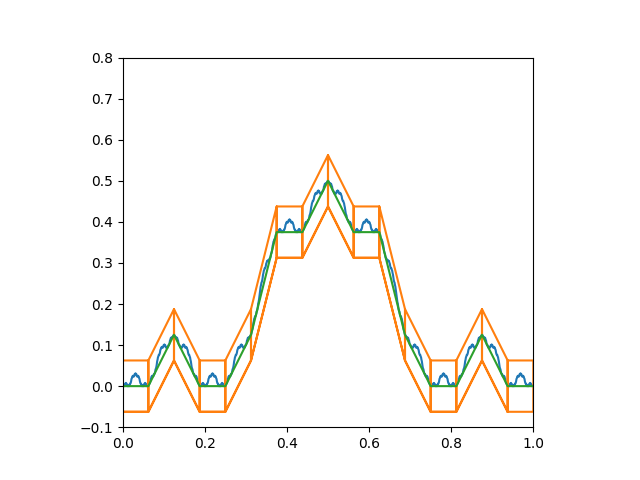}
\caption{Signal Takagi function $f_{\mathbf r}$, $H_4$ and $S_{4}$, where $r_n=(-1)^n$.}
\end{figure}

\end{exam}





\bibliographystyle{amsplain}

\end{document}